\documentclass[12pt,leqno]{amsart}

%
%
 \newtheorem{thm}{Theorem}[section]
 
 \newtheorem{lem}[thm]{Lemma}
 \newtheorem{prop}[thm]{Proposition}
 \theoremstyle{definition}
 
 \theoremstyle{remark}

 \numberwithin{equation}{section}
 \newtheorem*{conj}{Conjecture}

\begin{document}

\title{On  conciseness of words in profinite groups}
\author{Eloisa Detomi}
\address{Dipartimento di Matematica, Universit\`a di Padova, \\
 Via Trieste 63,\\ 35121 Padova, \\ Italy}
\email{detomi@math.unipd.it}
\author{Marta Morigi}
\address{Dipartimento di Matematica, Universit\`a di Bologna,\\
Piazza di Porta San Donato 5, \\ 40126 Bologna, \\ Italy}
\email{marta.morigi@unibo.it}
\author{Pavel Shumyatsky}
\address{Department of Mathematics, University of Brasilia,\\
Brasilia-DF, \\ 70910-900 Brazil}
\email{pavel@unb.br}

\thanks{This research was partially supported by Universit\`a 
di Padova (Progetto di Ricerca di Ateneo:``Invariable generation of groups").}
 \thanks{2010 {\it Mathematics Subject Classification.} 20E18; 20F12; 20F14.}
\thanks{{\it Keywords:}  Profinite groups; Words; Verbal subgroups; Commutators.}


\begin{abstract}
 Let $w$ be a group word. It is conjectured that if $w$ has only countably many values in a profinite group $G$,
 then the verbal subgroup $w(G)$ is finite. In the present paper we confirm the conjecture in the cases 
 where $w$ is a multilinear commutator word,  or the word $x^2$, or the word $[x^2,y]$. 
\end{abstract}

\maketitle

\section{Introduction} 

Let $w=w(x_1, \ldots ,x_k)$ be a group-word, and let $G$   be a group. The verbal subgroup $w(G)$ of $G$ determined by $w$ 
  is the subgroup generated by the set of all values $w(g_1,\ldots ,g_k),$ where $g_1,\ldots ,g_k$ are elements of $G.$
 A word $w$   is said to be concise if whenever the set of its values is finite in $G,$ it always follows that the subgroup $w(G)$ is finite.
 More generally, a word $w$ is said to be concise in a class of groups $X$   if whenever the set of its values is finite in a group $G \in X,$
 it always follows that $w(G)$ is finite. P. Hall asked whether every word is concise, but later Ivanov proved that this problem has a negative solution in its general form \cite{ivanov}
 (see also \cite[p.\ 439]{ols}). On the other hand, many relevant words are known to be concise.

For instance, it is an easy observation by P. Hall
 that every non-com\-mu\-ta\-tor word is concise (see e.g. \cite[Lemma 4.27]{rob2v1}).
 A word $w$ is non-com\-mu\-ta\-tor if the sum of exponents of at least one variable involved in $w$ is non-zero.
 It was shown in \cite{jwilson}  that the multilinear commutator words are concise (see also \cite{m+g}). Such words are also known under the name of outer commutator words and are precisely the words that can be written in the form of multilinear Lie monomials. Merzlyakov showed that every word is concise in the class of linear groups 
\cite{merzlyakov}  while Turner-Smith proved that every word is concise in the class of residually finite groups all of whose quotients are again residually finite   \cite{TS2}. It was shown in \cite{as} that if $w$ is a multilinear commutator word and $n$ is a prime-power, then the word $w^n$ is concise in the class of residually finite groups. Another interesting family of words that are concise in residually finite groups was exhibited in \cite{g+s}.

There is an open problem, due to Jaikin-Zapirain \cite{jaikin}, whether every word is concise in the class of profinite groups. Of course, the verbal subgroup $w(G)$ in a profinite group G is defined as the closed subgroup generated by all $w$-values. In the present paper we deal with a newly discovered phenomenon that suggests that the very definition of conciseness in profinite groups can perhaps be relaxed. More precisely, we provide evidence in support of the following conjecture.

\begin{conj} 
 Assume that the word $w$ has only countable many values in a profinite group $G.$ Then the verbal subgroup $w(G)$ is finite.
\end{conj}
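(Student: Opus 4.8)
The plan is to split the statement into a topological reduction that should hold for \emph{every} word and a subsequent conciseness-type finiteness step. Let $k$ be the number of variables of $w$ and consider the word map $w\colon G^{(k)}\to G$ on the $k$-fold power $G^{(k)}=G\times\cdots\times G$. This map is continuous and $G^{(k)}$ is compact, so the value set $G_w=w(G^{(k)})$ is compact; writing $G=\varprojlim_N G/N$ over the open normal subgroups $N$ of $G$, one gets $G_w=\varprojlim_N (G/N)_w$ as an inverse limit of the \emph{finite} sets $(G/N)_w$ of $w$-values in the finite quotients. Thus $G_w$ is a Stone space, and if it is countable it is scattered. The first goal is to upgrade countability of $G_w$ to finiteness of $G_w$; the second is to pass from finiteness of $G_w$ to finiteness of $w(G)$.

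For the first goal I would argue by Baire category. Since $G^{(k)}=\bigsqcup_{v\in G_w} w^{-1}(v)$ is a countable union of closed sets and the compact space $G^{(k)}$ is Baire, some fibre $w^{-1}(v_0)$ has non-empty interior. As every non-empty open subset of $G^{(k)}$ contains a box $a_1N\times\cdots\times a_kN$ with $a_1,\dots,a_k\in G$ and $N$ an open normal subgroup of $G$, I obtain such data with
\[
 w(a_1n_1,\dots,a_kn_k)=v_0 \qquad\text{for all } n_1,\dots,n_k\in N .
\]
This local constancy of $w$ on a full box of cosets of $N$ is the engine of the argument: it is the only channel through which the countability hypothesis feeds the group structure back into $w$.

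From here the aim is to propagate the identity: substituting it into the expression defining $w$, and using that products and conjugates of $w$-values remain $w$-values (hence lie in $w(G)$), one wants to show that the projection $G_w\to (G/N)_w$ is finite-to-one, whence $G_w$ is finite. Once $G_w$ is finite, the conclusion that $w(G)$ is finite is exactly the assertion that $w$ is concise in the class of profinite groups.

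The main obstacle lies precisely in these last two steps. The local identity yields workable relations only once the algebraic form of $w$ is exploited: for $w=x^2$, for multilinear commutator words, and for $[x^2,y]$, specific choices of the $n_i$ produce explicit commutator identities that force $G_w$ to be finite and then pin down $w(G)$, but no uniform manipulation is known for an arbitrary word. Furthermore, the final implication ``$G_w$ finite $\Rightarrow w(G)$ finite'' is itself the unresolved profinite conciseness problem of Jaikin-Zapirain. I would therefore run the general reduction above for an arbitrary $w$, carry both remaining steps to completion by direct computation in the three families where they are tractable, and regard the general word as the conjecture that this reduction is designed to support.
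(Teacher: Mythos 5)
You were asked to prove the paper's \emph{Conjecture}, and the first thing to say plainly is that the paper itself does not prove it: it establishes only the three instances in Theorems \ref{mcw}, \ref{squares} and \ref{[x^2,y]}. You correctly recognize this and decline to claim a general proof, so your honesty is not at issue; but your proposed reduction scheme contains a concrete gap and, moreover, factors the problem in a way the paper deliberately avoids. The gap: from constancy of $w$ on a single box $a_1N\times\cdots\times a_kN$ you want to deduce that the projection $G_w\to (G/N)_w$ is finite-to-one, hence that $G_w$ is finite. Nothing in the box identity controls the fibres over values other than $v_0$, and Baire category cannot close this on its own: a countable compact subset of a profinite group can perfectly well be infinite (homeomorphic to the ordinal $\omega+1$, say), so ``countable plus compact'' does not yield finite. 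Worse, your factorization routes everything through the implication ``$G_w$ finite $\Rightarrow w(G)$ finite,'' which is exactly Jaikin-Zapirain's open profinite conciseness problem; for a general word your scheme is therefore strictly harder than the conjecture it is meant to attack. It is also not how the paper handles the solved cases: in none of the three theorems is the value set shown finite as an intermediate step (its finiteness falls out only at the end, once $w(G)$ is finite).

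On the three tractable families, your box/local-constancy idea does match the paper's opening move for $x^2$ and $[x^2,y]$: there the sets $S_k=\{y\mid y^2=x_k\}$ (respectively $S_k=\{y\mid [H,y]=H_k\}$) are closed, cover $G$, and Baire gives a coset $aH$ with $(ah)^2=a^2$, which is precisely your box with $k=1$. But the continuation is structural rather than fibre-counting: Lemma \ref{conjclass} (a countable conjugacy class is finite, again by Baire), Zelmanov's local finiteness of periodic profinite groups via Lemma \ref{lf}, a chain of quotient reductions, and Schur's theorem. For multilinear commutator words the paper's engine is entirely different from anything in your sketch: the countable-covering results (Theorems \ref{cov1} and \ref{cov2}), Nikolov--Segal to control $\delta_k$-values in finitely generated quotients (Lemma \ref{boundedly-many}), induction on derived length (Proposition \ref{soluble}), and the skew-congruence Lemma \ref{skew congruence}, which shows each $w$-value generates a procyclic subgroup consisting of $w$-values --- countable, hence finite --- so that values have finite order and the covering theorems apply. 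The lesson worth internalizing is that the countability hypothesis is exploited through \emph{countable coverings by closed subgroups} (where countable profinite means finite), not through an attempted upgrade of the value set itself from countable to finite.
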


We show that the above conjecture holds true whenever $w$ is multilinear commutator word.
Thus, our first result is as follows.

\begin{thm}\label{mcw}
 Let $w$ be a multilinear commutator word and $G$ a profinite group having only countably many $w$-values. Then the verbal subgroup $w(G)$ is finite.
\end{thm}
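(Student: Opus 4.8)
The plan is to reduce the statement to the case of \emph{finitely} many $w$-values and then invoke the conciseness of multilinear commutator words. First I would argue that it suffices to prove that $G$ has only finitely many $w$-values. Indeed, $w$ is concise by \cite{jwilson}, so if $G$ (viewed as an abstract group) has finitely many $w$-values, then the abstract verbal subgroup generated by them is already finite; a finite subgroup of a Hausdorff group is closed, so it coincides with the topological verbal subgroup $w(G)$, and the theorem follows. Thus the real task is the implication: countably many values $\Rightarrow$ finitely many values.

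For this I would extract two consequences of countability. The first is a Baire category argument: the compact space $G^{(k)}=G\times\cdots\times G$ is a Baire space, and it is the union of the countably many closed fibres $w^{-1}(v)$, $v\in G_w$, since the word map $w\colon G^{(k)}\to G$ is continuous. Hence some fibre has nonempty interior, which produces an open normal subgroup $N$, elements $a_1,\dots,a_k$ and a value $v_0$ with $w(a_1n_1,\dots,a_kn_k)=v_0$ for all $n_i\in N$; that is, $w$ is constant on an open ``box''. The second consequence is a homogeneity argument. The set $G_w$ is compact (a continuous image of $G^{(k)}$) and conjugation-invariant, since $w(g_1,\dots,g_k)^h=w(g_1^h,\dots,g_k^h)$. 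Thus for each value $v$ the conjugacy class $v^G$ is homeomorphic to the coset space $G/C_G(v)$, a countable compact homogeneous space; by the Baire category theorem such a space has an isolated point, hence is discrete, hence finite. Therefore $C_G(v)$ is open for \emph{every} $w$-value $v$, so all $w$-values have finite conjugacy classes.

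The heart of the proof is then to combine the local constancy on the box with the open-centralizer property and the commutator identities satisfied by outer commutator words, so as to force $G_w$ to be finite. Varying a single argument across a coset, for instance replacing $g_i$ by $g_im$ with $m\in N$, the near-multilinearity of $w$ writes $w(\dots,g_im,\dots)$ as a product of a conjugate of $w(\dots,g_i,\dots)$ and further $w$-values (for $w=[x,y]$ this is simply $[g_1m,g_2]=[g_1,g_2]^m[m,g_2]$). Because every $w$-value has an open centralizer, the conjugations occurring in these expansions are controlled on an open subgroup, while the constancy on the box pins down the remaining factors. The aim is to upgrade this local, cocycle-type information to the global statement that $w(g_1,\dots,g_k)$ depends only on the cosets $g_iM$ modulo a fixed open normal subgroup $M$, which gives $\lvert G_w\rvert\le\lvert G/M\rvert^{\,k}<\infty$ and completes the reduction. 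This propagation step is exactly where I expect the main difficulty to lie: the open centralizers of distinct values need not intersect in an open subgroup, so one must use the multilinear structure carefully to prevent the correction terms from generating infinitely many distinct values. Once finiteness of $G_w$ is secured, the first paragraph yields that $w(G)$ is finite.
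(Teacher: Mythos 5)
Your first two paragraphs are fine but do not carry any real weight. The opening reduction is essentially vacuous: since a finite $w(G)$ forces the value set to be finite, the implication ``countably many values $\Rightarrow$ finitely many values'' is (granting Wilson's conciseness theorem) equivalent to the theorem itself, so the problem has been restated rather than reduced. The second paragraph is correct and overlaps with tools the paper also uses: Baire category applied to the closed fibres $w^{-1}(v)$, and the observation that a countable compact conjugacy class of a $w$-value must be finite (this is Lemma \ref{conjclass} of the paper, in the same form). The genuine gap is your third paragraph, which is a statement of intent rather than an argument: the ``propagation step'' --- that $w(g_1,\dots,g_k)$ depends only on the cosets $g_iM$ for some fixed open normal $M$ --- is never proved, and you yourself flag it as the main difficulty. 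Concretely, when you expand $w(g_1n_1,g_2,\dots,g_k)$ by multilinearity, the correction terms are conjugates of values such as $w(n_1,g_2,\dots,g_k)$ with $n_1$ ranging over $N$. Open centralizers control the conjugates of a \emph{fixed} value, but the set $\{w(n_1,g_2,\dots,g_k)\ :\ n_1\in N\}$ is merely a countable compact subset of the value set; unlike a conjugacy class it carries no homogeneity, and a countable compact set need not be finite (think of a convergent sequence together with its limit). Your Baire box pins down such terms only near the single tuple $(a_1,\dots,a_k)$, and conjugating the box only transports it to conjugate tuples, so there is no mechanism to reach arbitrary $(g_1,\dots,g_k)$. (For $w=[x,y]$ your scheme can in fact be pushed through, because there $x^{-1}x^g=[x,g]$ is itself a $w$-value, whence \emph{every} element of $G$ has open centralizer; but already for $w=\delta_2$ the displacement $[x_1,x_2]^{-1}[x_1,x_2]^g=[[x_1,x_2],g]$ of an inner value is not a $\delta_2$-value, so that crutch disappears and nothing replaces it.)

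The paper's own proof is evidence that this is not a routine patch but the actual content of the theorem. Any proof must, in particular, rule out $w$-values of infinite order; the paper does this via the skew-congruence Lemma \ref{skew congruence} after reducing to soluble groups (Proposition \ref{soluble}, which already requires the Nikolov--Segal theorem through Lemma \ref{boundedly-many}), and it then converts ``all values of finite order, countably many of them'' into finiteness of the verbal subgroup using the covering Theorems \ref{cov1} and \ref{cov2}, which rest on Zelmanov's local finiteness theorem for periodic profinite groups and on finite-rank results. Nothing in your local analysis addresses torsion, local finiteness, or rank at all, so the proposal as it stands has a genuine gap at its core, and the elementary means you allow yourself (commutator identities plus open centralizers of values) are very unlikely to close it.
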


Further, we attempt to deal with the conjecture in the case where $w$ is a non-commutator word. 
Recall that the ``classical" conciseness of such words is just an easy observation. In sharp contrast, 
the above conjecture 
 for non-commutator words seems hard to deal with. So far we succeeded only in the case of the word $w=x^2.$
\begin{thm}\label{squares}
Let $G$ be a profinite group having only countably many squares. Then $G^2$ is finite.
\end{thm}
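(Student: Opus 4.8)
The plan is to exploit the fact that in a profinite group a countable closed subset is highly restricted: a closed subgroup of a profinite group is again profinite, and an infinite profinite group is uncountable (by homogeneity it is a perfect compact Hausdorff space, hence uncountable by the Baire category theorem). Thus every countable closed subgroup of $G$ is finite, and I will use this dichotomy repeatedly.

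First I would examine the continuous squaring map $\phi\colon G\to G$, $\phi(g)=g^2$. Its image $S=\phi(G)$ is compact, hence closed, and by hypothesis countable, so $G=\bigcup_{s\in S}\phi^{-1}(s)$ is a countable union of closed fibres. Since $G$ is compact Hausdorff it is a Baire space, so some fibre $\phi^{-1}(s)$ has nonempty interior; as the open normal subgroups form a base at $1$, there exist an element $a$ and an open normal subgroup $N$ with $aN\subseteq\phi^{-1}(a^2)$. Expanding $(an)^2=a^2$ for every $n\in N$ yields $ana^{-1}=n^{-1}$, so conjugation by $a$ inverts $N$; since inversion is an automorphism only of abelian groups, $N$ is abelian. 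Then squaring is a continuous endomorphism of $N$, so $N^2$ is a closed subgroup contained in $S$, hence countable and therefore finite.

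Consequently $C:=\{n\in N:\ n^2=1\}$ has index $|N^2|$ in $N$, so it is open in $G$; being characteristic in the normal subgroup $N$, it is normal in $G$, and it is an elementary abelian $2$-group, that is, a vector space over the field with two elements. The next step is to control the conjugation action of $G$ on $C$. Fixing a finite transversal $\{t_i\}$ of $C$ in $G$ and writing $\sigma_i(c)=t_i^{-1}ct_i$ for the (linear) conjugation action of $t_i$ on $C$, one computes $(t_ic)^2=t_i^2\,(\sigma_i+1)(c)$, so the coset $t_i^2\,\mathrm{Im}(\sigma_i+1)$ consists of squares and is therefore countable. Hence $\mathrm{Im}(\sigma_i+1)$ is finite, which forces $C_C(t_i)=\ker(\sigma_i+1)$ to be open in $C$. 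Intersecting over the finitely many $t_i$ produces an open subgroup $Z$ of $C$ centralized by every $t_i$ and, being inside the abelian group $C$, by $C$ itself; thus $Z\le Z(G)$ with $Z$ open, so $G$ is centre-by-finite.

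Finally, Schur's theorem gives that $G'$ is finite. Passing to the abelian profinite group $G/G'$, the squares form the closed subgroup $(G/G')^2$, which is countable (it is covered by the image of $S$) and hence finite. Therefore $G^2G'/G'$ is finite; since $G'$ is finite, $G^2G'$ is finite and so is its subgroup $G^2$. I expect the \emph{main obstacle} to be the passage from the local data---an open abelian $N$ with $N^2$ finite---to the global conclusion: the decisive move is to feed the countability of squares a second time into the conjugation action on $C$ in order to manufacture an open central subgroup, after which Schur's theorem closes the argument.
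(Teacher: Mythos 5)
Your proof is correct, and its skeleton is the same as the paper's: Baire category produces an element $a$ and an open normal subgroup $N$ with $(an)^2=a^2$ for all $n\in N$; one reduces to an exponent-$2$ situation; countability of squares is used a second time to make centralizers of finitely many coset representatives open; intersecting them gives an open centre; Schur's theorem makes $G'$ finite; and the abelian quotient is handled as in Lemma \ref{ab}. However, you execute the first key step in a genuinely better way. The paper concludes that the open subgroup is abelian only after a detour: it invokes Lemma \ref{conjclass} (a countable conjugacy class is finite) together with Lemma \ref{lf} --- whose proof rests on Zelmanov's theorem on periodic compact groups \cite{z} --- to place $a^2$ inside a finite normal subgroup, passes to the quotient so that $(ah)^2=1$, and only then observes that $a$ inverts $H$. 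You instead note that $(an)^2=a^2$ already yields $a^{-1}na=n^{-1}$ for every $n\in N$, so conjugation by $a$ inverts $N$ and $N$ is abelian outright; no quotient, no conjugacy-class lemma, and no appeal to local finiteness are needed. Thus your argument for this theorem is entirely elementary (Baire category, Schur's theorem, and the fact that an infinite profinite group is uncountable), whereas the paper's written proof formally depends on a deep result. Your remaining variations are cosmetic: working with the open $2$-torsion subgroup $C=\{n\in N : n^2=1\}$ instead of passing to $G/H^2$, and phrasing the centralizer step as finiteness of the image of the linear map $\sigma_i+1$ over the field with two elements, which is exactly the paper's computation $(bh_1)^2=(bh_2)^2$ if and only if $h_1h_2\in C_H(b)$ in different notation. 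Both versions then finish identically via Schur and the abelian case.
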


Here, as usual, $G^n$ denotes the closed subgroup of $G$ generated by $n$-th powers.  

We were also able to confirm the conjecture 
 for the commutator word $w=[x^2,y]$.

\begin{thm}\label{[x^2,y]}
Let $G$ be a profinite group having only countably many values of the word $w=[x^2,y]$. Then the verbal subgroup $w(G)$  is finite.
\end{thm}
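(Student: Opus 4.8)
The plan is to reduce the statement to the theorems already established and to a Baire category analysis of the word map. The first step is the algebraic reformulation $w(G)=[G^2,G]$. Indeed, every value $[x^2,y]$ lies in $[G^2,G]$, so $w(G)\le[G^2,G]$; conversely $G^2$ is the closed subgroup generated by the squares, and, expanding $[x_1^2\cdots x_r^2,y]$ by means of the identity $[ab,c]=[a,c]^b[b,c]$, one writes every generator of $[G^2,G]$ as a product of conjugates of values $[x_i^2,y]^{\pm1}$; since $w(G)$ is normal these lie in $w(G)$, giving $[G^2,G]\le w(G)$. Thus it suffices to prove that $[G^2,G]$ is finite. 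It is worth stressing at once the feature that makes this harder than Theorem \ref{squares}: the word $[x^2,y]$ is insensitive to central squares, so a profinite group may have uncountably many squares and yet only countably many $w$-values (for instance when all squares are central, in which case $w(G)=1$). Hence one cannot simply feed the hypothesis into Theorem \ref{squares}; the central part of $G^2$ must be separated from the rest.

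Next I would exploit that a profinite group is a Baire space. The word map $\phi\colon G\times G\to G$, $\phi(x,y)=[x^2,y]$, is continuous, its image is countable by hypothesis, and its fibres are closed and cover the profinite (hence Baire) group $G\times G$. Therefore some fibre has non-empty interior: there are an open normal subgroup $M$, elements $a,b\in G$ and a value $c$ with
\[
[(am)^2,\,bn]=c\qquad\text{for all }m,n\in M.
\]
Fixing $m$ and letting $n$ vary, the equality $[(am)^2,bn]=[(am)^2,bn']$ forces $(am)^2$ to be centralized by every element $bn(bn')^{-1}=b\,n(n')^{-1}b^{-1}$, and as $n,n'$ range over $M$ these fill out $M$ (which is normal). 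Hence $(am)^2\in C_G(M)$ for every $m\in M$: all squares coming from the single coset $aM$ centralize $M$.

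The heart of the argument is to turn this local, single-coset information into a global statement. The aim is to produce an open normal subgroup $N$ for which $[N^2,N]=1$ (so that $N^2$ is abelian and centralized by $N$) and, simultaneously, to show that only finitely many squares of $G$ are non-central modulo $N$. Towards the first point one re-runs the Baire argument inside arbitrarily small open subgroups to spread the centralizing property over a family of cosets covering $G$, and then intersects the resulting centralizers; towards the second one passes to the finite quotient $G/N$, where $w$ takes finitely many values, and isolates the contribution of the non-central squares. At this stage Theorem \ref{squares} enters: applied to a suitable section of $G$ in which the central squares have been factored out, it shows that the non-central part of $G^2$ is finite. Combining this with the finiteness of $G/N$ yields that $[G^2,G]=w(G)$ is finite.

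The step I expect to be the main obstacle is precisely the globalization in the last paragraph: Baire category only delivers the centralizing identity on one coset and for one value $c$, whereas finiteness of $[G^2,G]$ is a statement about all squares and all conjugators. Reconciling these---that is, controlling the squares that fall outside the good coset, and ensuring that the finitely many non-central squares which survive actually have finite conjugacy classes, so that they contribute only finitely many commutators---is the delicate point where the non-commutator nature of $[x^2,y]$ and the full force of Theorem \ref{squares} must be brought together.
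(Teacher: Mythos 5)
Your opening moves are sound: the identity $w(G)=[G^2,G]$ is correct, and your Baire argument on the fibres of $(x,y)\mapsto[x^2,y]$ does yield an open normal $M$ and a coset $aM$ with $(am)^2\in C_G(M)$ for all $m\in M$. But the proof stops there: the ``globalization'' paragraph is a statement of intent, not an argument, and you say so yourself. Moreover, the plan as sketched cannot be completed in the form you describe. Re-running Baire inside smaller open subgroups always returns information about a \emph{single} coset of a yet smaller subgroup (the one fat fibre); it never produces a family of good cosets covering $G$, so you cannot reach $[N^2,N]=1$ this way. Applying Theorem \ref{squares} to a section requires that section to have countably many \emph{squares}, and this simply does not follow from $G$ having countably many $w$-values: the natural way to relate the two (squares agreeing modulo the centre have the same commutators) is only valid once $G$ has nilpotency class $2$, a reduction your proposal never makes. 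Finally, even your stated target --- an open normal $N$ with $[N^2,N]=1$ and finitely many non-central squares modulo $N$ --- would not finish the proof: finiteness of $[G^2,G]$ also needs the subgroups $[G^2,g]$ for coset representatives $g$ of $N$ to be finite, which is exactly the part left uncontrolled.

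The missing ideas are the two reductions the paper performs before any Baire argument. First, in the free group $[x_1,x_2,x_3]$ is a product of boundedly many $w$-values (Lemma \ref{free}), so $G$ has countably many $\gamma_3$-values; Theorem \ref{mcw} then makes $\gamma_3(G)$ finite, and one may assume $G'\le Z(G)$. Second, in this class-$2$ group every element of $H=G^2$ is a square modulo $Z(G)$, so all commutators $[h_1,h_2]$ with $h_i\in H$ are $w$-values; Theorem \ref{mcw} applied to $H$ makes $H'$ finite, so one may assume $H$ is abelian. Only now does countability convert into finiteness: for each $g$ the map $h\mapsto[h,g]$ is a homomorphism from $H$ onto the set $\{[h,g]\mid h\in H\}$, so $[H,g]$ is a countable, hence finite, subgroup. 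The Baire step is then applied not to the word map but to the sets $S_k=\{y\in G\mid [H,y]=H_k\}$, giving an open normal $N$ with $[H,N]=1$ after factoring out one finite central $H_j$; with finitely many coset representatives $g_1,\dots,g_n$ of $N$ one gets $w(G)=[H,G]=\prod_i[H,g_i]$, a product of finitely many finite subgroups. Without the conciseness machinery of Theorem \ref{mcw} (which your proposal never invokes), the obstacle you correctly identify at the end is genuine, and your argument does not get past it.
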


Throughout the paper  by a subgroup of a profinite group we mean a closed subgroup and we say that a subgroup  is 
 generated by a set $S$ to mean that  it is topologically generated by $S$.

\section{Multilinear commutator words}

A multilinear commutator word (outer commutator word) is a  word which is obtained by nesting commutators, but using always different variables. Thus the word $[[x_1,x_2],[x_3,x_4,x_5],x_6]$ is a multilinear commutator while the Engel word $[x_1,x_2,x_2,x_2]$ is not.  
An important family of multilinear commutator words is formed by the derived words $\delta_k$, on $2^k$ variables, which are defined recursively by
$$\delta_0=x_1,\qquad \delta_k=[\delta_{k-1}(x_1,\ldots,x_{2^{k-1}}),\delta_{k-1}(x_{2^{k-1}+1},\ldots,x_{2^k})].$$
Of course $\delta_k(G)=G^{(k)}$, the $k$-th derived subgroup of $G$. 
Another distinguished family of multilinear commutators are 
the simple commutators $\gamma_k$, given by $$\gamma_1=x_1, \qquad \gamma_k=[\gamma_{k-1},x_k]=
[x_1,\ldots,x_k].$$
The corresponding verbal subgroups $\gamma_k(G)$ are the terms of the lower central series of $G$. 

Recall that a group is periodic (torsion) if every element of the group has finite order 
 and  a group is called locally finite if each of its finitely generated subgroups is finite. 
 Periodic profinite groups have received a good deal of attention in the past. In particular, using Wilson's reduction theorem  \cite{wilson-torsion}, Zelmanov has been able to prove local finiteness of periodic profinite groups \cite{z}. 
A profinite group is said to be of finite rank $r$ if each subgroup of $G$ can be generated by at most $r$ elements.
In view of the above it is clear that a periodic profinite group of finite rank is finite. 

In this section we will make use of the following  two results from \cite{DMS-coverings}. 

\begin{thm}\label{cov1} Let $w$ be a multilinear commutator word and $G$ a profinite group that has countably
many periodic subgroups whose union contains all $w$-values in $G$. Then $w(G)$ is locally finite.
\end{thm}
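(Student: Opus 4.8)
The plan is to combine the Baire category theorem with the multilinearity of $w$ and Zelmanov's theorem that periodic profinite groups are locally finite. Enumerate the given periodic subgroups as $H_1, H_2, \ldots$; by Zelmanov each $H_i$, and hence each of its closed subgroups, is locally finite. Since a profinite group is compact and Hausdorff, it is a Baire space, so a countable cover by closed sets has a member with nonempty interior. I would apply this to the continuous word map $w \colon G^k \to G$: the group $G^k$ is profinite, the preimages $w^{-1}(H_i)$ are closed, and by hypothesis they cover $G^k$. Hence some $w^{-1}(H_j)$ contains a nonempty basic open set, and intersecting its $k$ coordinate factors yields an open normal subgroup $N$ of $G$ together with a tuple $(a_1, \ldots, a_k)$ such that $w(a_1N, \ldots, a_kN) \subseteq H_j$.

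The heart of the argument is to convert this ``based'' inclusion into control over genuine word values on an open subgroup. Here is where multilinearity enters: repeatedly applying the identities $[xy,z]=[x,z]^y[y,z]$ and $[x,yz]=[x,z][x,y]^z$, one expands each $w(a_1n_1, \ldots, a_kn_k)$, with $n_i \in N$, as a product of $w$-values whose entries are drawn from $\{a_1, \ldots, a_k\}$ and from $N$. Since $H_j$ is a subgroup, comparing these expansions for varying $n_i$ cancels the constant term $w(a_1, \ldots, a_k)$ and forces the values with all entries in $N$ to lie in $H_j$; after shrinking $N$ and replacing $H_j$ by a suitable conjugate to absorb the conjugating factors, I would conclude that $w(N) = \langle w(n_1, \ldots, n_k) : n_i \in N \rangle$ is contained in $H_j$. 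As $H_j$ is locally finite, the closed subgroup $w(N)$ is locally finite, and it is normal in $G$ because $N$ is.

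It remains to descend from $N$ to $G$. Passing to $\overline{G} = G/w(N)$, the covering hypothesis is inherited (word values, periodicity, and the covering all survive the quotient), while now $w(\overline{N}) = 1$ for the open normal subgroup $\overline{N} = N/w(N)$. Using a finite transversal $T$ of $\overline{N}$ and the same multilinear expansion, every word value of $\overline{G}$ reduces, modulo the subgroup generated by the values having an entry in $\overline{N}$, to one of the finitely many values $w(t_1, \ldots, t_k)$ with $t_i \in T$; feeding this back into the covering hypothesis and reapplying Baire controls these finitely many residual values as well. Assembling the pieces shows that $w(\overline{G})$, and therefore $w(G)$ as an extension of the locally finite normal subgroup $w(N)$, is locally finite.

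The step I expect to be the main obstacle is the second one. The periodic subgroups $H_i$ are not assumed normal, so the commutator expansions naturally deposit their pieces into different conjugates of $H_j$ rather than into $H_j$ itself; guaranteeing that, after the manipulations, all relevant values land in a single locally finite subgroup, and keeping the covering hypothesis intact through the descent to $\overline{G}$, is precisely where the special structure of multilinear commutator words must be used in full, and where an arbitrary word would fail.
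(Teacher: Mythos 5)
The paper does not actually prove this statement: Theorem \ref{cov1} is imported verbatim from \cite{DMS-coverings} and used as a black box, so the only benchmark is the proof in that reference, which occupies a substantial part of a separate paper and runs through the machinery (the subgroups $w(\mathbf{i})$, the skew-congruence lemma, Nikolov--Segal, Zelmanov) that the present paper reuses downstream. Your proposal is far shorter because its central step assumes something that is not available; the obstacle you flag at the end is not a technical nuisance to be engineered around but the actual content of the theorem.

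Your Baire step is correct: $G^k$ is covered by the closed sets $w^{-1}(H_i)$ (the $H_i$ are closed by the paper's convention), so some $w^{-1}(H_j)$ contains a coset tuple $(a_1N,\ldots,a_kN)$ with $N$ open normal, i.e.\ $w(a_1n_1,\ldots,a_kn_k)\in H_j$ for all $n_i\in N$. The failure is in the conversion to $w(N)\le H_j$. Expanding $w(a_1n_1,\ldots,a_kn_k)$ by the commutator identities writes it as a product of conjugates of mixed values $w(c_1,\ldots,c_k)$ with $c_i\in\{a_i,n_i\}$. Each mixed value is a $w$-value of $G$, so it lies in the union $\bigcup_i H_i$ --- but each in its own, value-dependent member of the cover, and nothing places these factors in $H_j$. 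Cancellation therefore yields, at best, that $w(n_1,\ldots,n_k)$ lies in a product of finitely many conjugates $H_{i_1}^{g_1}\cdots H_{i_m}^{g_m}$ whose indices and conjugators vary with $(n_1,\ldots,n_k)$. Such a product is not a subgroup, and --- the decisive point --- the subgroup generated by locally finite subgroups, even by two finite ones, need not be locally finite: the infinite dihedral pro-$2$ group (the semidirect product of the $2$-adic integers by a group of order $2$ acting by inversion) is topologically generated by two subgroups of order $2$, yet it contains a torsion-free procyclic subgroup. So there is no ``suitable conjugate of $H_j$'' that absorbs the pieces, no matter how $N$ is shrunk. The same defect propagates into your descent step and is compounded there: even in a quotient with $w(\overline N)=1$, the values with some entries in $\overline N$ and some outside generate a subgroup you have no means to show locally finite, and re-running Baire only ever places individual values inside single members of the cover, never jointly inside one. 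This gap --- set-theoretic containment in a union of well-behaved subgroups does not transfer to the generated subgroup --- is exactly what the apparatus of \cite{DMS-coverings} is built to overcome, and a Baire-plus-multilinear-expansion argument alone cannot replace it.
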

\begin{thm}\label{cov2} Let $w$ be a multilinear commutator word and $G$ a profinite group that has countably
many subgroups of finite rank whose union contains all $w$-values in $G$. Then $w(G)$ has finite rank.
\end{thm}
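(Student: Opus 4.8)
The plan is to collapse the countable cover to a \emph{finite} one by a Baire category argument, and then to convert the resulting local information into a statement about the verbal subgroup itself, using the classical conciseness of multilinear commutator words recalled in the Introduction together with the elementary fact that an extension of a group of finite rank by a finite group again has finite rank.

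First I would localize. Let $\phi\colon G^{k}\to G$ be the continuous word map $\phi(g_1,\dots,g_k)=w(g_1,\dots,g_k)$, so that the set of $w$-values is the compact set $\phi(G^{k})$, and put $F_i=\phi^{-1}(H_i)=\{\mathbf g\in G^{k}:\ w(\mathbf g)\in H_i\}$. Each $F_i$ is closed, and by hypothesis $G^{k}=\bigcup_i F_i$. Since $G^{k}$ is compact Hausdorff, hence a Baire space, some $F_i$ has nonempty interior; as the cosets of the subgroups $M^{k}$ (with $M$ an open normal subgroup of $G$) form a base of $G^{k}$, this yields an open normal subgroup $N\trianglelefteq G$, a tuple $(g_1,\dots,g_k)$, and a single finite-rank subgroup $H:=H_i$ such that
\[
w(g_1n_1,\dots,g_kn_k)\in H\qquad\text{for all } n_1,\dots,n_k\in N .
\]
Thus the $w$-values are completely controlled along one coset of $N^{k}$.

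The decisive step is to show that \emph{all} $w$-values of $G$ lie in finitely many cosets of a \emph{normal} subgroup $R$ of $G$ of finite rank. Granting this, the proof finishes quickly: in $\overline G=G/R$ the images of the $w$-values form a finite set, so $\overline G$ has only finitely many $w$-values, and by the conciseness of multilinear commutator words $w(\overline G)=w(G)R/R$ is finite; hence $w(G)/(w(G)\cap R)$ is finite. Since $w(G)\cap R\le R$ has finite rank and the quotient is finite, the extension principle gives that $w(G)$ has finite rank, as required. The subgroup $R$ is constructed from the one-coset control by expanding $w(g_1n_1,\dots,g_kn_k)$ through the commutator identities $[ab,c]=[a,c]^{b}[b,c]$ and $[a,bc]=[a,c][a,b]^{c}$ according to the nesting pattern of $w$: comparing tuples that differ in a single entry by an element of $N$, and using that $H$ is a subgroup, one progressively confines the ``purer'' values, having more and more arguments inside $N$, to $H$ and to finitely many of its conjugates, and then assembles these, along a transversal of $N$ in $G$, into a single normal subgroup $R$.

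The point at which the argument is genuinely delicate — and the reason multilinearity is indispensable — is precisely this last construction. The natural translates of the one-coset control land in \emph{different} conjugates $H^{x}$ of $H$, and in a profinite group a subgroup generated by several conjugates of a finite-rank subgroup need \emph{not} have finite rank (already two procyclic subgroups of a free profinite group may generate a subgroup of infinite rank). The crux is therefore twofold: to use the finiteness of $G/N$ to ensure that only boundedly many conjugates occur and that the conjugating elements may be taken from a fixed transversal of $N$, and to exploit the multilinear expansion to identify the resulting $R$, up to finite index, with a verbal subgroup of $N$, so that it sits inside a single finite-rank envelope and is normalized by $G$. I expect this transfer to require a dedicated structural lemma on the behaviour of multilinear commutator values in cosets of a normal subgroup — the exact analogue, in the finite-rank setting, of what underlies the locally finite conclusion of Theorem~\ref{cov1}. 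By contrast, the Baire reduction and the concluding extension argument are routine.
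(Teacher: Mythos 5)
There is a genuine gap, and it sits exactly where the theorem's content lies. Note first that this paper does not prove the statement at all: Theorem \ref{cov2} is imported from \cite{DMS-coverings}, where it is established by a considerably more elaborate argument combining Baire category with the combinatorial machinery of the subgroups $w(\mathbf{i})$ (a fragment of which is quoted here as Lemma \ref{skew congruence}) and reductions of Nikolov--Segal type. So your attempt must stand on its own, and it does not. Your opening Baire localization is correct (the $H_i$ are closed by the paper's convention, so the sets $F_i$ are closed and cover the compact space $G^k$), and your endgame is also sound: \emph{if} there were a closed normal subgroup $R\trianglelefteq G$ of finite rank with all $w$-values in finitely many cosets of $R$, then $G/R$ would have finitely many $w$-values, Wilson's conciseness of multilinear commutator words \cite{jwilson} would make $w(G)R/R$ finite, and finite rank passes through finite extensions. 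But the pivotal middle claim --- the existence of such an $R$ --- is never proved: you explicitly defer it to ``a dedicated structural lemma'' that you neither state nor establish. That claim carries the entire theorem; it is essentially equivalent to it, since conversely one may take $R=w(G)$ once the theorem is known. As written, the proposal reduces the theorem to itself modulo a routine Baire step.

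Moreover, the route you sketch for producing $R$ founders on precisely the obstruction you yourself identify. Expanding $w(g_1n_1,\dots,g_kn_k)$ via $[ab,c]=[a,c]^b[b,c]$ and $[a,bc]=[a,c][a,b]^c$ confines the ``purer'' values (those with arguments in $N$) only to products of conjugates of $H$, and the conjugating elements involve the $n_j$, which range over \emph{all} of $N$, not over a fixed transversal; so unboundedly many conjugates of $H$ occur, and, as you note, even two conjugates of a finite-rank closed subgroup can topologically generate a subgroup of infinite rank. Nor does iterating Baire inside each of the finitely many cosets of $N^k$ repair this: each application controls only one sub-coset of some smaller $M^k$, and these controlled sub-cosets never exhaust $G^k$, so no finite subcover of the set of $w$-values is obtained and the single subgroup $H$ cannot be parlayed into a global normal finite-rank envelope. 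Closing this gap is exactly what requires the heavier apparatus of \cite{DMS-coverings} --- in the same spirit as the soluble reduction via the derived words $\delta_k$ (cf.\ Lemma \ref{boundedly-many} and Proposition \ref{soluble}, which rest on \cite{NS}) and the induction over the finitely many tuples $\mathbf{i}$ with $w(\mathbf{i})\neq 1$ using Lemma \ref{skew congruence}. A single Baire localization plus abstract conciseness is not enough.
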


It will be convenient to first prove Theorem \ref{mcw} 
 in the special case when $w=\delta_k$ is a derived word.
 We require a lemma.

\begin{lem}\label{boundedly-many}  Let $m \ge 1$ and $k \ge 0$ be integers. 
 There exists a number $t=t(m,k)$ depending on $m$ and $k$ only, such that if $G$ is an
  $m$-generated profinite group, then
 every $\delta_{k}$-value in elements of $G'$ is a product of at most $t$ elements which are $\delta_{k+1}$-values in elements of $G$.
\end{lem}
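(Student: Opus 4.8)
The plan is to argue by induction on $k$. The statement for a given $k$ asserts that, for a suitable constant $t(m,k)$, every $\delta_k$-value whose arguments lie in $G'$ is a product of at most $t(m,k)$ $\delta_{k+1}$-values in $G$. The case $k=0$ is the heart of the matter: here a $\delta_0$-value in $G'$ is simply an arbitrary element of $G'$, while a $\delta_1$-value is an ordinary commutator, so the claim reduces to the assertion that the commutator width of an $m$-generated profinite group is bounded in terms of $m$ alone. This is exactly the celebrated theorem of Nikolov and Segal on finitely generated profinite groups, and I would invoke it to supply the constant $t(m,0)$. A minor point, which I would note but not belabour, is that their uniform bound is first established for finite $m$-generated groups and then transferred to the profinite setting by an inverse-limit and compactness argument.

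Before the inductive step I would record the standard fact that multilinear commutator words are conjugation-equivariant, that is $w(g_1,\dots,g_n)^g=w(g_1^g,\dots,g_n^g)$, which follows by an immediate induction on the nesting of $w$. Consequently the set of $\delta_{k+1}$-values in $G$ is closed under conjugation, a fact I will use repeatedly in the reduction below.

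For the inductive step, suppose the statement holds for $k-1$. By the recursive definition of $\delta_k$, a $\delta_k$-value with arguments in $G'$ has the form $[u,v]$, where $u$ and $v$ are $\delta_{k-1}$-values with arguments in $G'$. The inductive hypothesis then writes $u=u_1\cdots u_s$ and $v=v_1\cdots v_r$ with $s,r\le t(m,k-1)$ and each $u_i,v_j$ a $\delta_k$-value in $G$. Expanding $[u,v]$ by the commutator identities $[xy,z]=[x,z]^y[y,z]$ and $[x,yz]=[x,z][x,y]^z$ expresses it as a product of at most $sr\le t(m,k-1)^2$ conjugates of the commutators $[u_i,v_j]$. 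Each $[u_i,v_j]$ is a commutator of two $\delta_k$-values, hence a $\delta_{k+1}$-value, and by conjugation-equivariance so is each of its conjugates. This yields the step with the recursion $t(m,k)=t(m,k-1)^2$, so that $t(m,k)=t(m,0)^{2^k}$ works.

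The only genuine difficulty is concentrated in the base case: everything after it is routine commutator calculus together with the conjugacy-closure of outer-commutator values. I would therefore expect the main obstacle to be quoting the Nikolov--Segal bound in the precise profinite form needed, and verifying that no implicit dependence on $G$, beyond the number $m$ of generators, creeps into the constants as the induction on $k$ is carried out.
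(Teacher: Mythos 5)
Your proof is correct and takes essentially the same route as the paper: the base case is the Nikolov--Segal bound on commutator width of an $m$-generated profinite group, and the inductive step expands $[u,v]$ via the identities $[xy,z]=[x,z]^y[y,z]$ and $[x,yz]=[x,z][x,y]^z$ to obtain the recursion $t(m,k)=t(m,k-1)^2$. Your explicit remark on conjugation-equivariance of $\delta_{k+1}$-values cleanly justifies a point the paper's proof leaves implicit.
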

\begin{proof}
The proof is by induction on  $k$. Let $H$ be the derived subgroup of $G$.  As $G$ is finitely generated,  
a theorem of Nikolov and Segal \cite{NS} tells us that $H$ coincides with the abstract subgroup of $G$ generated by commutators. Moreover every element of $H$ is a product
of $r$ commutators where $r$ depends only on  $m$. 
So for $k=0$ the result follows. 

 Assume that $k\geq 1$ and let $u$ be a $\delta_k$-value in elements of $H$. 
Write $u=[h_1,h_2]$ where $h_1, h_2$ are   $\delta_{k-1}$-values in  elements of $H$. 
  By induction, $h_1,h_2$  are  both  product of at most $t_1=t(m, k-1)$  $\delta_{k}$-values in elements of $G$. 
 Using the well-known commutator identities $[xy,z]=[x,z]^y\,[y,z]$, $[x,yz]=[x,z]\,[x,y]^z$ we can decompose
 $u$  as product of at most $t_1^2$ commutators $[a,b]$ where $a$ and $b$ are $\delta_{k}$-values in elements of $G$. 
 The lemma follows.
\end{proof}

\begin{prop}\label{soluble}
 Let $G$ be a profinite group and let $k$ be an integer. Suppose that $G$ has only countably many $\delta_k$-values.
 Then $G^{(k)}$ is finite.
\end{prop}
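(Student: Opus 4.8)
The plan is to establish two properties of $V:=G^{(k)}$ separately and then combine them: that $V$ is locally finite and that $V$ has finite rank. For the first, I would show that every $\delta_k$-value of $G$ has finite order; then the procyclic subgroups $\overline{\langle g\rangle}$, as $g$ ranges over the countably many $\delta_k$-values, form a countable family of finite (hence periodic) subgroups whose union contains all $\delta_k$-values, and Theorem \ref{cov1} yields that $V$ is locally finite. The very same family consists of subgroups of finite rank, so Theorem \ref{cov2} simultaneously gives that $V$ has finite rank. Since a locally finite group is periodic and, as recalled above, a periodic profinite group of finite rank is finite, this forces $V$ to be finite.

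The heart of the argument is thus the claim that each $\delta_k$-value has finite order, and this is where I would pass to finitely generated subgroups. Given a $\delta_k$-value $g=\delta_k(a_1,\dots,a_{2^k})$, let $G_0$ be the closed subgroup generated by $a_1,\dots,a_{2^k}$; then $g\in G_0^{(k)}$, so it suffices to prove that $G_0^{(k)}$ is finite whenever $G_0$ is finitely generated and has only countably many $\delta_k$-values. The key observation is that the set $W_0$ of $\delta_k$-values of $G_0$ is the image of the compact space $G_0^{2^k}$ under the continuous word map, hence $W_0$ is compact; being also countable, it is a countable compact space.

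Now I would invoke finite width: in a finitely generated profinite group the subgroup $G_0^{(k)}$ consists of products of boundedly many $\delta_k$-values. For $k=1$ this is the theorem of Nikolov and Segal already used in Lemma \ref{boundedly-many}, and the passage from one level to the next is exactly what Lemma \ref{boundedly-many} supplies, so that a finite bound $N=N(G_0)$ can be reached by induction on $k$. Consequently $G_0^{(k)}$ is the image of the compact countable space $W_0^{N}$ under the multiplication map, and is therefore itself countable and compact. Since a countable profinite group is finite, $G_0^{(k)}$ must be finite, which establishes the claim.

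I expect the main obstacle to be precisely this finite-width step, namely bounding the width of $G_0^{(k)}$ in $\delta_k$-values inside a finitely generated group. The difficulty is that the derived subgroup of $G_0$ need not itself be finitely generated, so the induction cannot be applied naively to $G_0'$; it is Lemma \ref{boundedly-many}, whose bound $t(m,k)$ depends only on the number of generators of the ambient group, that is tailored to circumvent this. The remaining ingredients---compactness of the value set, the fact that a countable profinite group is finite, and the combination of local finiteness with finite rank---are routine.
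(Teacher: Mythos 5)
Your skeleton agrees with the paper's (show every $\delta_k$-value has finite order, then combine Theorems \ref{cov1} and \ref{cov2} to get that $G^{(k)}$ is locally finite of finite rank, hence finite; reduce to a finitely generated subgroup $G_0$), but the step you yourself identify as the heart of the argument is a genuine gap, and in fact the claim is false. It is not true that in a finitely generated profinite group every element of $G_0^{(k)}$ is a product of boundedly many $\delta_k$-values once $k\geq 2$. By Jaikin-Zapirain's characterization of words of finite width in finitely generated pro-$p$ groups (this is exactly the paper \cite{jaikin} cited in the introduction), every word lying in $(F')^pF''$ has infinite width in some finitely generated pro-$p$ group, and $\delta_2=[[x_1,x_2],[x_3,x_4]]\in F''$ is such a word. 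Equivalently, the abstract subgroup generated by the $\delta_2$-values of such a group is a proper dense subgroup of the closed subgroup $G_0^{(2)}$, so no bound $N(G_0)$ as in your argument can exist. Your proposed derivation of the bound also cannot be repaired: Lemma \ref{boundedly-many} rewrites a \emph{single} $\delta_j$-value in elements of $G_0'$ as a bounded product of $\delta_{j+1}$-values of $G_0$, but to pass from width at level $k$ to width at level $k+1$ you would need every element of $G_0^{(k+1)}=(G_0')^{(k)}$ to be a bounded product of $\delta_k$-values of $G_0'$ --- that is, the width statement for the group $G_0'$, which is not finitely generated. This is precisely the obstruction you mention, and the lemma does not circumvent it; it only converts values, not arbitrary elements of the verbal subgroup. (Your argument is fine for $k=1$, where Nikolov--Segal \cite{NS} does give the width bound.)

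The paper avoids bounding the width of the whole subgroup altogether. It proves the Proposition by induction on $k$ \emph{for arbitrary profinite groups}, and uses Lemma \ref{boundedly-many} only to transfer \emph{countability}: if $G_0=\langle a_1,\ldots,a_{2^k}\rangle$ and $\delta_k(a_1,\ldots,a_{2^k})$ had infinite order, then every $\delta_{k-1}$-value of $H=G_0'$ is a product of at most $t(2^k,k-1)$ elements of the countable set of $\delta_k$-values of $G_0$, so $H$ has only countably many $\delta_{k-1}$-values. The inductive hypothesis --- which requires no finite generation, and this is the point, since $H$ need not be finitely generated --- then gives that $H^{(k-1)}$ is finite; as $\delta_k(a_1,\ldots,a_{2^k})$ is a $\delta_{k-1}$-value in elements of $H$ and hence lies in $H^{(k-1)}$, it has finite order, a contradiction. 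In short: countability, unlike finite width, survives the passage to $G_0'$, and the induction must run on the countability statement itself rather than on a width bound. The remaining ingredients of your proposal (compactness of the value set, the combination of Theorems \ref{cov1} and \ref{cov2}, finiteness of countable profinite groups) are correct and match the paper.
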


\begin{proof}
 The proof is by induction on $k$. Of course the result holds when $k=0$ because infinite profinite groups 
 are uncountable. 

By assumption, all  $\delta_k$-values are covered by countably many procyclic subgroups. Thus, by Theorem \ref{cov2},   
 $G^{(k)}$  has finite rank. 
 If every    $\delta_k$-value has finite order, then by  Theorem  \ref{cov1},  
 $G^{(k)}$ is locally finite, and hence finite. 
So we now assume that there exist  elements $a_1, \ldots , a_{2^k}$ such that  $\delta_k(a_1, \ldots , a_{2^k})$ has infinite order. Without loss of generality we can assume that $G=\langle a_1, \ldots , a_{2^k} \rangle$. 

 As $G$ is finitely generated, by Lemma \ref{boundedly-many}, every  $\delta_{k-1}$-value of $H=G'$ is a product of boundedly many $\delta_{k}$-values in elements of $G$. Therefore $H$ has only countably many $\delta_{k-1}$-values and by induction 
   $H^{(k-1)}$ is finite. In particular $\delta_k(a_1, \ldots , a_{2^k})$ has finite order, a contradiction. 
\end{proof}

We will require combinatorial techniques developed in \cite{DMS-coverings}.

Let $n\geq 1$. Denote by $I$ the set of all $n$-tuples $(i_1,\dots,i_n)$, where all 
entries $i_k$ are non-negative integers. We will view $I$ as a partially ordered set with the partial 
order given by the rule that $$(i_1,\dots,i_n)\leq(j_1,\dots,j_n)$$ if and only if 
$i_1\leq j_1,\dots,i_n\leq j_n$.

Let $G$ be a group and  $w=w(x_1,\dots,x_n)$  a  multilinear commutator word. 
For every $\mathbf{i}=(i_1,\ldots,i_n) \in I$, we write 
\[
w(\mathbf{i})=w(G^{(i_1)},\ldots,G^{(i_n)})
\]
 for  the subgroup generated by the $w$-values $w(a_1,\dots,a_n)$ with  $a_{j} \in G^{(i_j)}$. 
Further, set 
\[
w(\mathbf{i^+})=\prod w(\mathbf{j} ),
\]
 where the product is taken over all $\mathbf{j} \in I $ such that $\mathbf{j}>\mathbf{i}$.



The following result is  a special case of Proposition 7 of \cite{DMS-coverings}.

 \begin{lem}
\label{skew congruence}
Assume that there exists 
 $\mathbf{i} \in I$ with the property that $ w(\mathbf{i^+})=1$. 
Let $k$ be an index such that $i_k= \max \{ i_j \mid j=1,\ldots,n\}.$
Choose  $a_j\in G^{(i_j)}$ for every $j=1,\ldots,n.$ 
 Then for every integer $s$ we have 
$$w(a_1,\ldots,a_n)^s=w(a_1,\ldots, a_{ k-1},a_{ k}^s,a_{{ k}+1},\ldots, a_{n}).$$
\end{lem}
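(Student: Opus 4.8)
The plan is to reduce the statement to the assertion that, under the hypothesis $w(\mathbf{i^+})=1$, the map sending the $k$-th argument to the corresponding $w$-value behaves like a homomorphism on $G^{(i_k)}$. Concretely, I would first establish the \emph{near-multiplicativity} identity
\[
w(a_1,\dots,a_{k-1},xy,a_{k+1},\dots,a_n)\equiv w(a_1,\dots,x,\dots,a_n)\,w(a_1,\dots,y,\dots,a_n)\pmod{w(\mathbf{i^+})},
\]
valid for all $x,y\in G^{(i_k)}$ with the fixed choice $a_j\in G^{(i_j)}$, and then invoke $w(\mathbf{i^+})=1$ to turn it into an exact equality. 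The power formula then follows by iterating with $x=a_k^{s-1}$ and $y=a_k$.

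To prove the identity I would use that $w$ is multilinear and expand the $k$-th slot by the commutator identities $[xy,z]=[x,z]^y[y,z]$ and $[x,yz]=[x,z][x,y]^z$, pushed recursively through the bracketing tree of $w=[u,v]$ according to whether $x_k$ lies in $u$ or in $v$. This yields the two principal terms $w(\dots,x,\dots)$ and $w(\dots,y,\dots)$ together with a product of correction terms; schematically, by the Leibniz-type rule $[w(b_1,\dots,b_n),g]\equiv\prod_l w(b_1,\dots,[b_l,g],\dots,b_n)$ modulo higher terms, every correction is a $w$-value in which some argument has been replaced by a commutator $[b_l,g]$ with $g\in G^{(i_k)}$. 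The crucial point is the maximality of $i_k$: since each $G^{(c)}$ is characteristic and $[G^{(a)},G^{(b)}]\subseteq G^{(\max(a,b))}$, such a commutator lies in $G^{(\max(i_l,i_k))}=G^{(i_k)}$ when $l\ne k$ and $i_l<i_k$, and in $G^{(i_k+1)}$ when $i_l=i_k$ or $l=k$. In every case the index tuple of the resulting $w$-value strictly dominates $\mathbf{i}$, so each correction lies in $w(\mathbf{i^+})$; here one also uses that $w(\mathbf{i^+})$ is normal in $G$, being a product of the characteristic-argument subgroups $w(\mathbf{j})$, to absorb the conjugations produced by the expansion.

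Granting the identity, since $a_k\in G^{(i_k)}$ we have $a_k^s\in G^{(i_k)}$ for every $s$, so $\phi(x)=w(a_1,\dots,x,\dots,a_n)$ restricts to a homomorphism on the procyclic subgroup generated by $a_k$; induction on $s\ge 0$ via $\phi(a_k^{s})=\phi(a_k^{s-1})\phi(a_k)$ gives $\phi(a_k^s)=\phi(a_k)^s$. The case $s=0$ holds because a multilinear commutator word with a trivial entry is itself trivial, so $\phi(1)=1$, and negative $s$ follow from $\phi(a_k^{-1})=\phi(a_k)^{-1}$. This is precisely the claimed equality. I expect the main obstacle to be the bookkeeping of the second paragraph: verifying rigorously, by structural induction on $w=[u,v]$ while tracking the position of $x_k$, that \emph{every} correction term produced by the recursive commutator expansion is a $w$-value whose argument tuple strictly dominates $\mathbf{i}$. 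This is exactly where the maximality of $i_k$ is indispensable, since for a non-maximal coordinate the analogous correction would only reach index $i_l$ and need not lie in $w(\mathbf{i^+})$.
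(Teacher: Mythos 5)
The paper contains no proof of this lemma: it is quoted as a special case of Proposition~7 of \cite{DMS-coverings}, so the real comparison is with that reference, and your argument is essentially the one used there — multiplicativity of the $k$-th slot modulo $w(\mathbf{i^+})$, established by commutator expansion, then iterated to get powers. Your identification of the two crucial points is correct: every correction term is (a product of conjugates of) $w$-values whose index tuple strictly dominates $\mathbf{i}$, which is exactly where maximality of $i_k$ is indispensable, and normality of each $w(\mathbf{j})$, hence of $w(\mathbf{i^+})$, absorbs the conjugations. Two details should be nailed down in a full write-up, and both are completable. First, the Leibniz-type rule you invoke is not an element-wise identity; the usable form is the subgroup inclusion $[w(N_1,\dots,N_n),M]\le\prod_l w(N_1,\dots,[N_l,M],\dots,N_n)$ for normal subgroups $N_1,\dots,N_n,M$, proved by structural induction on $w$ via the three subgroups lemma. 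Second, in the inductive step for $w=[u,v]$ with $x_k$ occurring in $u$, the elements by which the principal terms $w(\dots,x,\dots)$ and $w(\dots,y,\dots)$ end up conjugated are $u$-values on the subgroups $G^{(i_j)}$; to apply the Leibniz inclusion with $M\le G^{(i_k)}$, as your sketch tacitly does when it asserts $g\in G^{(i_k)}$, you must note that a multilinear commutator evaluated on normal subgroups lies in their intersection, which equals $G^{(i_k)}$ precisely because $i_k$ is maximal among the indices of the variables of $u$. With these two points supplied, the bookkeeping you flag does go through, including the harmless order reversal that occurs when $x_k$ lies in the right-hand subword: the resulting swap correction is a commutator of two $w$-values of tuple $\mathbf{i}$ and hence lies in $[w(\mathbf{i}),G^{(i_k)}]\le w(\mathbf{i^+})$.
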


Now we are ready to prove  Theorem \ref{mcw}.

\begin{proof}[\bf{Proof of Theorem \ref{mcw}}]
Recall that $w$ is a multilinear commutator word and $G$ is a profinite group having only countably many $w$-values.
 We wish to prove that  $w(G)$ is finite.

Let $n$ be the number of variables involved in $w$. As every $\delta_n$-value is a $w$-value 
(see for example \cite[Lemma 4.1]{S}), it follows from Proposition \ref{soluble} that $G^{(n)}$ is finite.  
  Factoring out $G^{(n)}$ we can assume that $G$ is soluble. Thus, there exist only finitely many 
$\mathbf{i} \in I$ such that $ w(\mathbf{i}) \neq 1$. The theorem will be proved by induction on the number of 
such tuples $\mathbf{i}$.
Choose ${\bf i}=(i_1,\dots,i_n)\in I$ such that $w({\bf i})\neq1$ and  $ w(\mathbf{i^+})=1.$ 
  Applying Lemma \ref{skew congruence}, we 
 see that if $a_j\in G^{(i_j)}$ for every $j=1,\ldots,n,$ then every integral power
 of $w(a_1,\ldots,a_n)$ is again a $w$-value. As the set of $w$-values is closed, it follows that the 
 procyclic subgroup generated by $ w(a_1,\ldots,a_n)$ consists of $w$-values. Therefore the procyclic subgroup is countable, and hence finite.  
  This proves that $w(a_1,\ldots,a_n)$
 has finite order whenever $a_j\in G^{(i_j)}$ for every $j=1,\ldots,n$. Let $\tilde w$ be the word on $r=2^{i_1}+2^{i_2}+\ldots +2^{i_n}$ variables defined
 by: $$\tilde w(x_1,\dots,x_r)=w(\delta_{i_1}(x_1,\dots x_{2^{i_1}}),\ldots,\delta_{i_n}(x_{r-2^{i_n}+1},\ldots,
 x_{r})).$$
 Then $w(\mathbf{i})=\tilde w(G)$. Moreover, every $\tilde w$-value is in particular an element of the 
 form  $w(a_1,\ldots,a_n)$, where $a_j\in G^{(i_j)}$ for every $j=1,\ldots,n$,
 so it has finite order. Thus the set of $\tilde w$-values is contained in the union of countably many
 finite cyclic subgroups, and  it follows from Theorems \ref{cov1} and \ref{cov2} that $\tilde w(G)$ is both locally finite and 
 of finite rank, so it is finite. We can pass to the quotient $G/\tilde w(G).$ The number of tuples
$\mathbf{i}$ such that $ w(\mathbf{i}) \neq 1$ for the group $G/\tilde w(G)$ is strictly smaller than that for $G.$ 
Hence by induction we conclude that  $w(G)$ is finite, as required.
 \end{proof}
 
 \section{The words $x^2$ and $[x^2,y]$}

We first prove that if the word $x^2$ has only countably many values in a profinite group $G$ then $G^2$ is 
 finite.

\begin{lem}\label{conjclass}
  Let $G$ be a profinite group having a countable conjugacy class $X$. 
 Then the class $X$ is finite. 
\end{lem}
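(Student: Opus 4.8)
The plan is to treat the conjugacy class $X=\{x^g \mid g\in G\}$ as a topological space in its own right and then apply the Baire category theorem. First I would observe that $X$ is the image of $G$ under the continuous map $\kappa\colon G\to G$ given by $g\mapsto g^{-1}xg$. Since $G$ is compact and $G$ is Hausdorff, the image $X=\kappa(G)$ is compact, and compact subsets of a Hausdorff space are closed; thus $X$, endowed with the subspace topology, is a nonempty compact Hausdorff space, and in particular a Baire space.

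Next I would exploit the fact that $G$ acts on $X$ transitively by conjugation, which makes $X$ topologically homogeneous. Indeed, for each $h\in G$ the conjugation map $z\mapsto z^h$ is a homeomorphism of $G$ that carries $X$ onto itself and sends $x$ to $x^h$. Hence for any two points $y_1,y_2\in X$ there is a self-homeomorphism of $X$ taking $y_1$ to $y_2$. Consequently either every point of $X$ is isolated or none is.

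Now suppose, for contradiction, that $X$ is countably infinite, say $X=\{x_1,x_2,\dots\}$. If $X$ had no isolated point, then each singleton $\{x_n\}$ would be closed with empty interior, so $X$ would be a countable union of nowhere dense closed sets, contradicting the Baire category theorem for the nonempty compact Hausdorff space $X$. Therefore $X$ has an isolated point, and by homogeneity every point of $X$ is then isolated; that is, $X$ is discrete. But a compact discrete space is finite, contradicting the assumption that $X$ is infinite. Hence $X$ must be finite, as claimed.

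The step I would flag as the crux is homogeneity. Without it the conclusion fails for general profinite spaces: the one-point compactification of a countable discrete set is a countable, infinite, compact Hausdorff (indeed Boolean) space, so mere compactness together with countability does not force finiteness. It is precisely the transitivity of the conjugation action that rules out such behaviour, by forcing every point of $X$ to have the same local structure and thereby converting the single isolated point produced by Baire into global discreteness.
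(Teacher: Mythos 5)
Your proof is correct, but it runs along a genuinely different track from the paper's. The paper applies the Baire category theorem to the group $G$ itself: writing $X=\{x_1,x_2,\dots\}$, it decomposes $G$ into the closed fibers $S_k=\{y\in G \mid x_1^y=x_k\}$, so some $S_k$ has nonempty interior; since a nonempty $S_k$ is a coset of $C_G(x_1)$, the centralizer $C_G(x_1)$ is open, hence of finite index, and $|X|=[G:C_G(x_1)]$ is finite. You instead apply Baire to the class $X$ itself, first establishing that $X$ is compact (continuous image of the compact group under $g\mapsto x^g$) and topologically homogeneous (conjugation acts transitively by homeomorphisms), then arguing: Baire forces an isolated point, homogeneity makes every point isolated, and a compact discrete space is finite. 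Both arguments are Baire category arguments, but the objects decomposed are different ($G$ into fibers versus $X$ into singletons), and the payoffs differ: the paper's version delivers the group-theoretic byproduct that the centralizer is open and never needs to know that $X$ is closed in $G$, whereas yours isolates a purely topological statement --- every nonempty countable compact Hausdorff homogeneous space is finite --- which applies verbatim to any orbit of a continuous action of a compact group, not just to conjugacy classes. Your closing remark correctly identifies homogeneity as the load-bearing hypothesis, and the one-point compactification of a countable discrete set is exactly the right example showing that compactness plus countability alone do not suffice.
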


\begin{proof}
 Let $X=\{x_1,x_2,\dots\}$. For every $k \ge 1$  let $S_k=\{y\in G;\ x_1^y=x_k\}$. The sets $S_k$ are closed and they cover 
 the group $G$. By Baire Category theorem \cite[p. 200]{ke} at least one of the sets $S_k$ has non-empty 
 interior. Hence $C_G(x_1)$ is open. 
\end{proof}

\begin{lem}\label{ab}
Let $G$ be an abelian profinite group in which the set $\{x^n|\ x\in G\}$ is countable. Then $G^n$ is finite.
\end{lem}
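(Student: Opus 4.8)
The plan is to exploit the hypothesis that $G$ is abelian in order to turn the $n$-th power map into a homomorphism, and then use compactness together with the fact (already invoked in the proof of Proposition \ref{soluble}) that infinite profinite groups are uncountable. First I would observe that, since $G$ is abelian, the map $\phi\colon G\to G$ defined by $\phi(x)=x^n$ is a continuous endomorphism of $G$, because $(xy)^n=x^ny^n$. Consequently its image $\phi(G)=\{x^n\mid x\in G\}$ is not merely a subset but a subgroup of $G$.

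Next I would identify this image with $G^n$. On the one hand $\phi(G)$ consists of $n$-th powers, so $\phi(G)\subseteq G^n$; on the other hand $\phi(G)$ is a subgroup containing every $n$-th power, and it is closed, being the continuous image of the compact group $G$ in the Hausdorff space $G$. Hence $\phi(G)$ is a closed subgroup containing all the $n$-th powers, which forces $G^n\subseteq\phi(G)$. Therefore $G^n=\phi(G)=\{x^n\mid x\in G\}$.

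Finally I would appeal to the hypothesis: the set $\{x^n\mid x\in G\}$ is countable, so $G^n$ is a countable profinite group. Since an infinite profinite group is uncountable, it follows that $G^n$ is finite, as required. I do not expect any substantial obstacle here: the only points needing care are the verification that $\phi(G)$ is a subgroup, which uses commutativity, and that $\phi(G)$ is closed, which uses compactness of $G$ and the Hausdorff property of profinite groups; both are routine. The essential content is simply that, in the abelian setting, the set of $n$-th powers already \emph{is} the verbal subgroup $G^n$, so counting $n$-th powers directly bounds $G^n$.
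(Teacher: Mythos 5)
Your proposal is correct and follows essentially the same route as the paper: the paper's (very brief) proof also rests on the two facts that, by commutativity, every element of $G^n$ is itself an $n$-th power, and that a countable profinite group must be finite. Your write-up merely makes explicit the points the paper leaves implicit, namely that $x\mapsto x^n$ is a continuous endomorphism whose image is a closed subgroup, hence equals $G^n$.
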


\begin{proof} The result follows from the fact that, since $G$ is abelian, every element in $G^n$ is an  
$n$-th power and from the fact that a countable profinite group is finite. 
\end{proof}

\begin{lem}\label{lf} Let $G$ be a profinite group in which the set $\{x^n|\ x\in G\}$ is countable. 
Then $G$ is locally finite.
\end{lem}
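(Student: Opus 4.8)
The plan is to prove that a profinite group $G$ in which the set of $n$-th powers is countable must be locally finite. The natural strategy is to reduce to the periodic case and then invoke Zelmanov's theorem that periodic profinite groups are locally finite. So the heart of the matter is to show that $G$ is periodic, i.e. that every element has finite order.

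\begin{proof}[\bf{Plan}]
First I would show that every element of $G$ has finite order. Fix $g\in G$ and consider the procyclic subgroup $\overline{\langle g\rangle}$ it generates. The $n$-th powers of elements of this subgroup form a subset of the countable set $\{x^n \mid x\in G\}$, so the procyclic group $\overline{\langle g\rangle}$ itself has only countably many $n$-th powers. A procyclic group is abelian, so Lemma~\ref{ab} applies and tells us that $\overline{\langle g\rangle}^n$ is finite. Now in the abelian profinite group $\overline{\langle g\rangle}$ the quotient $\overline{\langle g\rangle}/\overline{\langle g\rangle}^n$ has exponent dividing $n$; being a procyclic group of finite exponent it is finite as well. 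Hence $\overline{\langle g\rangle}$ is finite, which means $g$ has finite order. As $g$ was arbitrary, $G$ is periodic.

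Having established that $G$ is periodic, I would then appeal directly to Zelmanov's theorem on the local finiteness of periodic profinite groups \cite{z} (cited in the excerpt via Wilson's reduction theorem \cite{wilson-torsion}), which immediately yields that $G$ is locally finite. This completes the argument.

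The step I expect to carry the real weight is the passage from ``countably many $n$-th powers'' to periodicity; once periodicity is in hand, local finiteness is a black-box application of Zelmanov's deep theorem. The only subtlety in the first step is the quiet use of the fact, recorded in Lemma~\ref{ab}, that a countable profinite group is finite — this is what forces both $\overline{\langle g\rangle}^n$ and the finite-exponent quotient $\overline{\langle g\rangle}/\overline{\langle g\rangle}^n$ to be finite. One must make sure the argument is applied to the \emph{closed} procyclic subgroup generated by $g$, so that it is genuinely profinite and the ambient hypothesis on $n$-th powers descends to it. I do not anticipate needing the conjugacy-class input of Lemma~\ref{conjclass} here; that lemma is presumably reserved for handling the full nonabelian structure in the proof of Theorem~\ref{squares}.
\end{proof}
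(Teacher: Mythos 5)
Your proposal is correct and follows essentially the same route as the paper: apply Lemma~\ref{ab} to closed procyclic subgroups to deduce that $G$ is periodic, then invoke Zelmanov's theorem \cite{z} for local finiteness. You even make explicit a small step the paper leaves implicit, namely that finiteness of $\overline{\langle g\rangle}^n$ forces $\overline{\langle g\rangle}$ itself to be finite via the finite-exponent procyclic quotient.
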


\begin{proof} It follows from the previous lemma that every procyclic subgroup of $G$ is finite. So $G$ 
is periodic. By  Zelmanov's result \cite{z}, $G$ is   locally finite. 
\end{proof}

 \begin{proof}[\bf{Proof of Theorem \ref{squares}}]
Let $G$ be a profinite group having only countably many squares and  let $X=\{x_1,x_2,\dots\}$ be the set of squares of elements of $G$.
 For every $k \ge 1$ let $S_k=\{y\in G|\ y^2=x_k\}$. The sets $S_k$ cover $G$ so by Baire 
Category Theorem at least one of them has non-empty interior. Therefore $G$ contains a normal open 
subgroup $H$ and an element $a$ such that $(ah)^2=a^2$ for any $h\in H$. By Lemma \ref{conjclass}, 
$a^2$ has only a finite number of conjugates; moreover, by Lemma \ref{lf},  $G$ is locally finite
 so $a^2$ is contained in a finite normal subgroup of $G$. Thus, we pass to the quotient over that subgroup and
assume that $(ah)^2=1$ for any $h\in H$.
So $a$ inverts  all  elements in $H$ and thus $H$ is abelian. It follows from  Lemma \ref{ab}  that $H^2$ is finite.  
 Passing to the quotient $G/H^2,$ we
can assume that $H$ has exponent $2$.

Now fix an arbitrary $b\in G$. The set $\{(bh)^2|\ h\in H\}$ is countable. 
On the other hand, $(bh_1)^2=(bh_2)^2$ if and only if  $h_1h_2\in C_H(b)$. Therefore the quotient $H/C_H(b)$ is
countable. 
 We conclude that $C_H(b)$ has finite index  in $H$. This happens for every $b\in G$. Let $b_1H,\ldots,b_nH$ be  the cosets of $H$ in $G$.
As $H$ is abelian, the center  of $G$ contains the intersection of the subgroups $C_H(b_i)$ for $i=1, \dots, n$. 
We conclude that $G$ is central-by-finite. By Schur's Theorem \cite[10.1.4]{rob}, $G'$ is finite. 
 We can pass to the quotient $G/G'$. 
 The result now  follows from Lemma \ref{ab}.
 \end{proof}

The last part of this section is devoted to proving Theorem \ref{[x^2,y]}. 

\begin{lem}\label{free}
 Let $w=[x^2,y]$. There exists a constant $c$ such that in any group $G$ every $\gamma_3$-value 
can be written as the product of at most $c$ values of the word $w.$ 
\end{lem}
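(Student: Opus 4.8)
The plan is to reduce the whole statement to a single identity in a free group. It suffices to prove that the particular $\gamma_3$-value $[x_1,x_2,x_3]$ can be written as a finite product of $w$-values in the free group $F$ on three generators $x_1,x_2,x_3$. Indeed, suppose $[x_1,x_2,x_3]=\prod_{i=1}^{c}w(u_i,v_i)$ for suitable words $u_i,v_i\in F$. Then for any group $G$ and any $g_1,g_2,g_3\in G$, the homomorphism $F\to G$ sending $x_j\mapsto g_j$ carries this relation into an expression of the arbitrary $\gamma_3$-value $[g_1,g_2,g_3]$ as a product of the same number $c$ of $w$-values. Thus the constant obtained in $F$ works simultaneously for every group, and — this is the point that makes the argument short — we only need the \emph{existence} of a finite such expression in $F$, not an explicit bound on its length.

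The key step is then to show that $\gamma_3(F)$ is contained in the verbal subgroup $w(F)$, that is, in the subgroup generated by all $w$-values. For this I would pass to the quotient $\bar F=F/w(F)$. By definition every $w$-value is trivial in $\bar F$, so $[\bar p^{\,2},\bar q]=1$ for all $\bar p,\bar q\in\bar F$; equivalently, the square of every element of $\bar F$ is central. Hence $\bar F/Z(\bar F)$ has exponent $2$ and is therefore abelian, so $\bar F$ is nilpotent of class at most $2$ and $\gamma_3(\bar F)=1$. This gives $\gamma_3(F)\le w(F)$, and in particular $[x_1,x_2,x_3]\in w(F)$.

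It remains to turn membership in $w(F)$ into an honest product of $w$-values, with no inverses. Here I would invoke that the set of $w$-values is closed under conjugation, since $w(x,y)^{g}=w(x^{g},y^{g})$, together with the identity $w(x,y)^{-1}=w(x^{y},y^{-1})$ (a direct check: both sides equal $y^{-1}x^{-2}yx^{2}$), which shows that the inverse of a $w$-value is again a $w$-value. Consequently the collection of finite products of $w$-values is already closed under inversion, hence is a subgroup, necessarily equal to $w(F)$; so every element of $w(F)$, and in particular $[x_1,x_2,x_3]$, is a finite product of $w$-values. Combined with the reduction above, this proves the lemma. The only place that requires genuine insight, rather than routine commutator manipulation, is the structural observation that $F/w(F)$ is nilpotent of class at most $2$; once that is in hand, no explicit identity and no width estimate are needed, because the mere finiteness of one expression for $[x_1,x_2,x_3]$ in $F$ propagates to a uniform constant $c$ for all groups by specialization.
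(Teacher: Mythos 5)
Your proof is correct and follows essentially the same route as the paper: reduce to the free group on $x_1,x_2,x_3$, observe that $F/w(F)$ is nilpotent of class at most $2$ so that $[x_1,x_2,x_3]\in w(F)$, note that inverses of $w$-values are again $w$-values, and then specialize to an arbitrary group. The only difference is that you supply details the paper leaves implicit (the exponent-$2$ argument for class $\le 2$, and the explicit identity $w(x,y)^{-1}=w(x^y,y^{-1})$ in place of the paper's remark that $[a^2,b]$ is conjugate to $[a^2,b^{-1}]$), and both check out.
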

\begin{proof}
Let $F$ be the free abstract group on free generators $x_1,x_2,x_3$
  and let $K=w(F)$. Since  $F/K$ is nilpotent of class $2$, the commutator $[x_1,x_2,x_3]$ lies in $K$.
  Therefore $[x_1,x_2,x_3]$ can be written as the product of finitely many, say $c$,  $w$-values or 
  their inverses. We remark that in any group $[a^2,b]$ is conjugate to $[a^2,b^{-1}]$.  Therefore the
  inverse of a $w$-value is again a $w$-value. Thus $[x_1,x_2,x_3]$ is the product of at most $c$ $w$-values.
 This holds in the free group and hence in any group.
\end{proof}

  \begin{proof}[\bf{Proof of Theorem \ref{[x^2,y]}}]
Let $G$ be a profinite group having only countably many values of the word $w=[x^2,y]$. By Lemma 
\ref{free}, every    $\gamma_3$-commutator
  is a product of finitely many $w$-values. Hence    $G$ has only countably many    $\gamma_3$-commutators. 
  It follows from  Theorem \ref{mcw} that   $\gamma_3(G)$ is finite. Passing to the quotient $G/\gamma_3(G),$ 
  without loss of generality we can 
  assume that $G' \leq Z(G)$, where $Z(G)$ is the center of $G$. 

Let $H=G^2$. Of course $w(G)=[H,G].$ Since $G/Z(G)$ is abelian, every element of $H$ is a square modulo $Z(G)$. Thus
for every $h\in H$ and $g \in G$ the commutator $[h,g]$ 
is a $w$-value. In particular, $H$ contains only countably many commutators $[h_1,h_2].$ 
 It follows from Theorem \ref{mcw} that $H'$  is finite. Passing to the quotient $G/H',$ we
can assume that $H$ is abelian. 

Therefore for every $g\in G$
the subgroup $[H,g]$ coincides with the set of commutators $\{[h,g] \mid h\in H\}$.  
Thus $[H,g]$  is countable, hence finite. 
Since the set of $w$-values in $G$ is countable, 
 $G$ contains only countably
many finite subgroups generated by $w$-values. In particular $G$ contains countably many finite subgroups
$H_1,H_2,\dots$ such that for every $g\in G$ there exists $i$ with the property that $[H,g]=H_i$.

 For every $k \ge 1$ set $S_k=\{y\in G|\ [H,y]=H_k\}$. By   Baire 
Category Theorem, there exist a positive integer $j$, an open normal subgroup $N$ and an element $g\in G$ 
such that 
$$ [H,gu ] \le H_j$$
for every $u \in N$. Recall that $H_j$  is finite and central in $G$. We pass to the quotient $G/H_j$ and
without loss of generality we assume that $[H,gu ]=1$ for every $u \in N$. It follows that $[H,N]=1$. 
 Since $N$ is open, we can choose finitely many elements $g_1, \ldots , g_n$ that generate $G$ modulo $N$. 
 Then we have 
$$[H,G]= \prod_{i=1}^n [H,g_i].$$
Since the subgroups $[H,g_i]$ are finite, so is $w(G)$. The proof is complete.
   \end{proof}

\end{document}